\documentclass[12pt]{amsart}
\usepackage{amsmath,amsthm,amssymb,amsfonts}
\usepackage{tikz}
\usepackage{placeins}
\usepackage{needspace}
\usepackage{microtype}
\usepackage[hidelinks]{hyperref}
\hypersetup{pdftitle={The lambda-chain condition for unit interval determinantal fiber rings},pdfauthor={Takayuki Hibi and Sora Miyashita},pdfkeywords={unit interval determinantal facet ideals, special fiber rings, pseudo-Gorenstein}}
\newtheorem{Theorem}{Theorem}[section]
\newtheorem{Lemma}[Theorem]{Lemma}
\newtheorem{Corollary}[Theorem]{Corollary}
\theoremstyle{definition}
\newtheorem{Example}[Theorem]{Example}
\theoremstyle{remark}
\newtheorem{Remark}[Theorem]{Remark}
\newcommand{\Jc}{\mathcal J}
\newcommand{\Fc}{\mathcal F}
\newcommand{\Rc}{\mathcal R}
\newcommand{\mm}{\mathfrak m}
\DeclareMathOperator{\Hilb}{Hilb}
\begin{document}
\title[Unit interval determinantal fiber rings]{The $\lambda$-chain condition for unit interval determinantal fiber rings}
\author[T.~Hibi]{Takayuki Hibi}
\author[S.~Miyashita]{Sora Miyashita}
\address{(Takayuki Hibi) Department of Pure and Applied Mathematics, Graduate School of Information Science and Technology, The University of Osaka, Suita, Osaka 565--0871, Japan}
\email{hibi@math.sci.osaka-u.ac.jp}
\address{(Sora Miyashita) Department of Pure and Applied Mathematics, Graduate School of Information Science and Technology, The University of Osaka, Suita, Osaka 565--0871, Japan}
\email{u804642k@ecs.osaka-u.ac.jp}
\subjclass[2020]{06A11, 13H10, 13A30, 13C40}
\keywords{distributive lattice, $\lambda$-chain condition, unit interval determinantal facet ideal, special fiber ring, pseudo-Gorenstein ring}
\begin{abstract}
It is shown that certain finite partially ordered sets arising from unit interval determinantal facet ideals satisfy the $\lambda$-chain condition. In addition, their applications to commutative algebra are briefly discussed.
\end{abstract}
\maketitle
\thispagestyle{empty}

\section*{Introduction}
A chain is a totally ordered subset of a poset. A maximal chain cannot be enlarged; a maximum chain has the most elements. A finite poset $P$ satisfies the \emph{$\lambda$-chain condition} if every element belongs to a maximum chain; see \cite[p.~308]{EC1}. It is \emph{pure} if all maximal chains have the same length. Purity implies the $\lambda$-chain condition, which characterizes pseudo-Gorenstein Hibi rings \cite[Theorem~2.1]{pseudo}.

Fix $m\geq2$ and intervals $C_i=[a_i,b_i]\subseteq[n]$ such that $a_1<\cdots<a_s$, $b_1<\cdots<b_s$, and $|C_i|\geq m$. Set
\[
L=\bigcup_{i=1}^s\binom{C_i}{m},\qquad \rho_i=|C_i\cap C_{i+1}|\quad(1\leq i<s).
\]
Identify each $m$-subset with its increasing tuple. With the componentwise order, $\binom{[n]}m$ is a distributive lattice whose meet and join are the componentwise minimum and maximum. If $I\in\binom{C_i}m$ and $J\in\binom{C_j}m$ with $i\leq j$, then $I\wedge J\in\binom{C_i}m$ and $I\vee J\in\binom{C_j}m$. Thus $L$ is a distributive sublattice; see \cite[Lemma~3.4]{ALL}. Let $P$ be its subposet of join-irreducibles, the elements with exactly one lower cover. Then $L\cong\Jc(P)$, where $\Jc(P)$ consists of the downward closed subsets of $P$, ordered by inclusion.

\begin{Theorem}\label{lambda-general}
Suppose that $\rho_i\leq m$ or $\rho_i\geq2m-1$ for every $i<s$. Then $P$ satisfies the $\lambda$-chain condition. Moreover, $P$ is pure if and only if $\rho_i\leq2m-1$ for every $i<s$.
\end{Theorem}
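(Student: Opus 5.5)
The plan is to make $P$ completely explicit and then check the $\lambda$-chain condition through rank functions. I will use the standard criterion. Put $h(x)$ for the number of elements in a longest chain of $P$ with top $x$, $d(x)$ for the number in a longest chain with bottom $x$, and $\ell$ for the number of elements in a maximum chain. Then $P$ satisfies the $\lambda$-chain condition if and only if $h(x)+d(x)-1=\ell$ for every $x\in P$. Also, $P$ is pure if and only if every maximal chain starts at a minimal element with $h=1$, ends at a maximal element with $d=1$, and every cover $x\lessdot y$ satisfies $h(y)=h(x)+1$.

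\emph{Step 1: describe $P$.} For a single block, $\binom{C_i}{m}$ is isomorphic to $\Jc([m]\times[c_i])$ with $c_i=|C_i|-m$. Its join-irreducibles are the tuples that differ from the minimum $(a_i,a_i+1,\dots,a_i+m-1)$ in a single "row", indexed by pairs $(k,t)$. For the union $L$, I will determine which $I\in L$ have exactly one lower cover inside $L$. Here a lower cover in $L$ means decreasing one entry by $1$ while staying in some $\binom{C_j}m$. An element of $\binom{C_i}{m}$ lying outside $\binom{C_{i-1}}{m}$ has its lower covers computed inside $\binom{C_i}m$ or $\binom{C_{i-1}}m$. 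This should show that $P$ is obtained from the rectangles $P_i\cong[m]\times[c_i]$ by removing from $P_{i+1}$ the part already generated by $\binom{C_i}m$. The new join-irreducibles coming from $C_{i+1}$ are those in which the largest entries exceed $b_i$. Their comparabilities with the elements of $P_i$ are controlled by $\rho_i$. Concretely, I expect each new element of block $i+1$ to lie above exactly those elements of $P_i$ in a staircase region whose shape depends on $\min(\rho_i,m)$ and on $\rho_i-m$.

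\emph{Step 2: compute $h$ and $d$.} Inside a rectangle $[m]\times[c]$, the functions $h$ and $d$ are the usual coordinate sums. Along the glued poset I will compute them block by block, by induction on $s$. I will record $h$ on the top boundary of the part coming from $C_1,\dots,C_i$, and $d$ on the bottom boundary of the part coming from $C_{i+1},\dots,C_s$. The two cases of the hypothesis correspond to two gluing patterns.
\begin{itemize}
\item If $\rho_i\le m$, the overlap lattice $\binom{C_i\cap C_{i+1}}{m}$ is at most one point. The gluing then occurs along a single "corner" element, so longest chains simply pass through it, and $h+d$ stays constant.
\item If $\rho_i\ge 2m-1$, the overlap contains a full copy of $[m]\times[\rho_i-m]$ with $\rho_i-m\ge m-1$. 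This is enough room that every element of the new block sees the old block through a longest route.
\end{itemize}
In both cases I verify that $h(x)+d(x)-1$ equals the common value $\ell=\sum_i(\text{contribution of block } i)$. The intermediate range $m<\rho_i<2m-1$, which the hypothesis excludes, is where some element would miss this value.

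\emph{Step 3: purity.} Given the $\lambda$-chain condition, purity reduces to ruling out, or exhibiting, a cover $x\lessdot y$ with $h(y)>h(x)+1$, or a maximal chain with a short end. When $\rho_i\le 2m-1$, I will check from the explicit covers of Step 1 that every cross-block cover raises $h$ by exactly one. In the case $\rho_i\le m$ this is immediate. When $\rho_i=2m-1$, the shifted staircase matches exactly. When $\rho_i\ge 2m$, I will exhibit a cover relation between an element of the new block and an element of $P_i$ that skips a level. This gives a maximal chain shorter than $\ell$, so $P$ is not pure.

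I expect the main obstacle to be Step 1: pinning down precisely the join-irreducibles of $L$ that involve two consecutive blocks, and their covering relations. In particular, elements lying in $\binom{C_{i+1}}m$ can acquire extra lower covers from $\binom{C_i}m$. I would first work out the case $s=2$ with small $m$ (say $m=2,3$) to guess the staircase shape, and then prove it in general.
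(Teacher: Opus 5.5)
Your overall architecture is workable: the criterion $h(x)+d(x)-1=\ell$ is correct, the two regimes are the right ones, and the paper also detects non-purity through a single level-skipping cover. As written, though, this is not yet a proof. Every step that depends on the actual structure of $P$ is deferred (\emph{I expect}, \emph{should show}, \emph{enough room}), and nothing explains why the threshold is $2m-1$.

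The missing idea is an explicit model of $P$ when all $\rho_i\geq 2m-1$. Put $A=a_1$, $B=b_s$, and let $\alpha(j)$ be the left endpoint of the first interval containing $j$. The join-irreducibles are $z_{r,j}=(\alpha(j),\ldots,\alpha(j)+m-r-1,j-r+1,\ldots,j)$ for $1\leq r\leq m$ and $A+m\leq j\leq B$. The order is generated by the grid steps $z_{r,j}<z_{r+1,j}$ and $z_{r,j}<z_{r,j+1}$, together with one extra relation $d_k=z_{m,a_k+m-1}<u_k=z_{1,b_{k-1}+1}$ for each $k\geq2$. Indeed, comparing coordinates, $z_{r,j}<z_{q,\ell}$ with $r>q$ forces $j\leq a_k+m-1$, where $C_k$ is the first interval containing $\ell$. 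So every such comparability factors as $z_{r,j}\leq d_k<u_k\leq z_{q,\ell}$, and the cross-block relations come from one relation per block, not from a staircase region you must guess.

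The label $\varphi(z_{r,j})=j-A-m+r$ rises by $1$ along grid steps and by $\rho_{k-1}-2m+2$ along $d_k<u_k$. The condition $\rho_{k-1}\geq2m-1$ is exactly what makes that jump positive, and this single computation does the work:
\begin{itemize}
\item Since $\varphi$ is strictly increasing, $h=\varphi$ and no chain has more than $B-A$ elements.
\item A grid path from $z_{1,A+m}$ to $z_{m,B}$ through any given element has exactly $B-A$ elements.
\item When $\rho_{k-1}\geq2m$, it gives your level-skipping cover, once you check by coordinates that $d_k\lessdot u_k$ really is a cover in $P$.
\end{itemize}

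There is also a concrete error in Step 1 that affects the regime $\rho_k\leq m$: a lower cover in $L$ need not be obtained by decreasing one entry by $1$. Let $L^-$ and $L^+$ be the unions of the $\binom{C_i}{m}$ with $i\leq k$ and with $i>k$. If $\rho_k<m$, then $\binom{C_k\cap C_{k+1}}{m}$ is empty, so there is no shared corner. Instead $\max L^-=(b_k-m+1,\ldots,b_k)$ is covered in $L$ by $\min L^+=(a_{k+1},\ldots,a_{k+1}+m-1)$, which is larger in every coordinate. For example, take $m=2$, $C_1=[1,3]$, $C_2=[3,5]$. Then $(3,4)$ covers $(2,3)$ in $L$, but $(2,4)\notin L$. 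Under your definition $(3,4)$ has no lower cover and would be dropped, although it is join-irreducible.

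The clean treatment also removes the need for a mixed block-by-block induction. The inequality $\rho_k\leq m$ gives $b_k-m+1\leq a_{k+1}$, hence $\max L^-\leq\min L^+$. Writing $P^{\pm}$ for the join-irreducible posets of $L^{\pm}$, you get ordinal sums: $P\cong P^-\oplus P^+$ if $\rho_k=m$, and $P\cong P^-\oplus\{e\}\oplus P^+$ with $e=\min L^+$ if $\rho_k<m$. Maximal chains of an ordinal sum are concatenations of maximal chains of the summands. So both the $\lambda$-chain condition and purity reduce to runs in which every $\rho_i\geq 2m-1$. There the label $\varphi$ finishes the argument, and purity holds exactly when all those $\rho_i$ equal $2m-1$.
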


Let $\Delta_I$ denote the maximal minor on columns $I$ of a generic $m\times n$ matrix $X=(x_{ij})$ over a field $K$. The complex with facet set $L$ is unit interval, with maximal cliques $C_i$ and determinantal facet ideal $J=(\Delta_I:I\in L)$. Since $J$ is generated in one degree, its special fiber ring is
\[
R=K[\Delta_I:I\in L]\cong\Fc(J)=\bigoplus_{q\geq0}J^q/\mm J^q,
\]
where $\mm=(x_{ij})$ and each generating minor of $R$ has degree one.

A diagonal SAGBI degeneration of $R$ gives a Hibi ring with the same $h$-vector \cite{ALL,LS}. Theorem~\ref{lambda-general} therefore has the following consequences.

\begin{Corollary}\label{pseudo-general}
Suppose that $\rho_i\leq m$ or $\rho_i\geq2m-1$ for every $i<s$. Then the special fiber ring $R$ is pseudo-Gorenstein.
\end{Corollary}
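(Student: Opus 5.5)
This corollary is essentially formal once Theorem~\ref{lambda-general} is available. The plan is to pass from $R$ to a Hibi ring through the SAGBI degeneration, and then apply the characterization of pseudo-Gorenstein Hibi rings by the $\lambda$-chain condition.

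\emph{Step 1 (degeneration).} I would invoke the diagonal SAGBI degeneration of \cite{ALL,LS}. With respect to a diagonal term order, the initial algebra $\mathrm{in}(R)=K[\mathrm{in}(\Delta_I):I\in L]$ is the Hibi ring $K[L]$ of the distributive lattice $L\cong\Jc(P)$. Both $R$ and $K[L]$ are standard graded, with each minor in degree one, and they share the same Hilbert function. Hence they have the same $h$-vector. Since Hibi rings are normal Cohen--Macaulay domains, the flat degeneration shows that $R$ is a Cohen--Macaulay domain of the same dimension $|P|+1$.

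\emph{Step 2 (pseudo-Gorenstein via the $h$-vector).} For a standard graded Cohen--Macaulay domain with $h$-vector $(h_0,\dots,h_s)$ and $h_s\neq0$, pseudo-Gorensteinness means $h_s=1$. Equivalently, the canonical module has a one-dimensional initial graded piece, which is the last entry of the $h$-vector. This property therefore depends only on the $h$-vector. So $R$ is pseudo-Gorenstein if and only if $K[L]$ is.

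\emph{Step 3 (apply the theorem).} By \cite[Theorem~2.1]{pseudo}, $K[\Jc(P)]$ is pseudo-Gorenstein exactly when $P$ satisfies the $\lambda$-chain condition. Under the hypothesis $\rho_i\le m$ or $\rho_i\ge 2m-1$ for every $i<s$, Theorem~\ref{lambda-general} gives the $\lambda$-chain condition. This concludes the argument.

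The only point needing care is Step 2. One must check that the paper's definition of pseudo-Gorenstein is the $h$-vector one, $h_s=1$, rather than a module-theoretic one. One must also confirm that Cohen--Macaulayness transfers, so that $h_s$ is the correct invariant for $R$. Both follow from standard SAGBI deformation facts: Cohen--Macaulayness of the initial algebra lifts to the algebra, and Hilbert series are preserved.
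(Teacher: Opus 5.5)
Your proposal is correct and follows the paper's proof. You pass to the Hibi ring $K[L]$ via the diagonal SAGBI degeneration of \cite{ALL,LS}, use equality of $h$-vectors, and combine Theorem~\ref{lambda-general} with \cite[Theorem~2.1]{pseudo}. The only minor difference is that you obtain Cohen--Macaulayness of $R$ by lifting it from the initial algebra, whereas the paper cites it directly from \cite[Theorem~4.3 and Corollary~4.7(iv)]{ALL}.
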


\begin{Corollary}\label{equivalences-general}
Suppose that $\rho_i\leq m$ or $\rho_i\geq2m-1$ for every $i<s$. Then the following are equivalent: $R$ is Gorenstein; $R$ is level; $R$ is nearly Gorenstein; $\rho_i\leq2m-1$ for every $i<s$.
\end{Corollary}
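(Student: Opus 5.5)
The plan is to derive Corollary~\ref{equivalences-general} from Theorem~\ref{lambda-general} and Corollary~\ref{pseudo-general}, together with general facts about canonical modules of Cohen--Macaulay standard graded domains. $R$ is a Cohen--Macaulay domain, since it has a SAGBI degeneration to a Hibi ring, which is a normal Cohen--Macaulay domain. The proof is then a cycle of implications with the Gorenstein property as the hub.

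\emph{Step 1: Gorenstein $\Leftrightarrow$ $\rho_i\leq2m-1$ for all $i$.} By Stanley's theorem, a Cohen--Macaulay standard graded domain is Gorenstein if and only if its $h$-vector is symmetric. The $h$-vector of $R$ equals that of the Hibi ring $K[L]$ \cite{ALL,LS}. By Hibi's criterion, $K[L]$ is Gorenstein exactly when $P$ is pure. So $R$ is Gorenstein if and only if $P$ is pure, and by the second assertion of Theorem~\ref{lambda-general} this holds if and only if $\rho_i\leq2m-1$ for every $i<s$.

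\emph{Step 2: the easy implications.} Gorenstein implies level, since $\omega_R\cong R(a)$ is generated in one degree. Gorenstein also implies nearly Gorenstein, since then $\operatorname{tr}(\omega_R)=R\supseteq\mm$.

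\emph{Step 3: level $\Rightarrow$ Gorenstein.} Under our hypothesis, Corollary~\ref{pseudo-general} says $R$ is pseudo-Gorenstein. This means the top entry of the $h$-vector is $1$, equivalently $\dim_K(\omega_R)_{-a}=1$, where $a$ is the $a$-invariant. If $R$ is level, every minimal generator of $\omega_R$ lies in degree $-a$. Hence $\omega_R$ is cyclic, so $R$ is Gorenstein.

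\emph{Step 4: nearly Gorenstein $\Rightarrow$ Gorenstein.} I expect this to be the main obstacle, because nearly Gorensteinness is not visible from the $h$-vector and need not pass through the SAGBI degeneration. My first attempt would be to prove, for a pseudo-Gorenstein Cohen--Macaulay standard graded domain, that nearly Gorenstein forces Gorenstein:
\begin{itemize}
\item Realize $\omega_R$ as a graded fractional ideal containing a unique (up to scalar) lowest-degree generator $u$.
\item Use $\mm\subseteq\operatorname{tr}(\omega_R)=\sum_{\varphi}\varphi(\omega_R)$ with $\varphi\in\operatorname{Hom}(\omega_R,R)$ homogeneous. Compare degrees to show that some $\varphi$ of degree $a$ (or of degree $a+1$ applied to $u$) gives a relation forcing $\omega_R$ to be principal.
\item Here I would invoke the known bound that the generators of the canonical module of a nearly Gorenstein standard graded ring lie in at most two consecutive degrees, and combine it with pseudo-Gorensteinness.
\end{itemize}
If this general route stalls, the fallback is combinatorial. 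I would use the Herzog--Hibi--Stamate characterization: a Hibi ring is nearly Gorenstein if and only if every connected component of its poset is pure. One would then show directly that nearly Gorensteinness of $R$ forces it for $K[L]$ (for instance via the Rees-algebra and SAGBI structure in \cite{ALL}). Finally, one checks that $P$ is connected, since $L$ does not decompose as a product, so that purity of components is exactly purity of $P$, and Step~1 finishes the argument.
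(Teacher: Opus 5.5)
\textbf{Gap: Step~4 (nearly Gorenstein $\Rightarrow$ Gorenstein) is not proved.} Your Steps 1--3 are correct and match the paper's argument: Stanley's criterion, Hibi's purity criterion and Theorem~\ref{lambda-general} give the Gorenstein criterion, and pseudo-Gorenstein plus level gives a cyclic canonical module. Step~4 is the only implication that is not formal, and your first route there is only a list of intentions; no degree comparison is actually carried out. The ``two consecutive degrees'' input would not suffice even if granted. One generator in degree $-a$ together with several generators in degree $-a+1$ is compatible both with that bound and with $h_s=1$. Your fallback needs ``$R$ nearly Gorenstein $\Rightarrow$ $K[L]$ nearly Gorenstein'', and this does not follow. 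Degeneration arguments move properties from the initial algebra up to $R$, not back down. Nearly Gorensteinness is also not determined by the Hilbert series. So the SAGBI structure of \cite{ALL} gives no such transfer. Connectedness of $P$ does hold here, since $P$ is built by ordinal sums from posets with a minimum and a maximum, but that does not repair the transfer.

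\textbf{What the paper does, and how to finish your first route.} The paper cites \cite[Theorem~A]{Miyashita}: a nearly Gorenstein, pseudo-Gorenstein Cohen--Macaulay standard graded domain of dimension at least $2$ is Gorenstein. It treats $\dim R=1$ separately: then $P=\emptyset$, $L$ is a single tuple, and $R\cong K[t]$. Your second bullet ($\varphi$ of degree $a+1$ applied to $u$) points the right way; what is missing is the following.
\begin{itemize}
\item Write $\omega=\omega_R$ and $\omega_{-a}=Ku$. Since $R$ is a domain and $\omega$ is torsion-free of rank one, every nonzero homogeneous $\varphi\in\operatorname{Hom}_R(\omega,R)$ has $\varphi(u)\neq0$. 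Moreover $\varphi(u)w=\varphi(w)u$ for all $w\in\omega$.
\item Suppose $R$ is not Gorenstein. Then $\operatorname{tr}(\omega)$ contains no nonzero constant, so every nonzero $\varphi$ has degree at least $a+1$. Hence the degree-one part of $\operatorname{tr}(\omega)$ is $\{\varphi(u):\deg\varphi=a+1\}$.
\item Now $\mathfrak m\subseteq\operatorname{tr}(\omega)$ gives, for each $x\in R_1$, some $\varphi$ with $x=\varphi(u)$, so $x\omega=\varphi(\omega)u\subseteq uR$. Thus $\mathfrak m\omega\subseteq uR\cong R(a)$, and $\omega/uR$ has finite length.
\item Consider $0\to R(a)\to\omega\to\omega/uR\to0$. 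When $\dim R\geq2$, both $H^0_{\mathfrak m}(\omega)$ and $H^1_{\mathfrak m}(R)$ vanish. This forces $\omega/uR=0$, so $\omega=uR\cong R(a)$, contradicting the assumption that $R$ is not Gorenstein.
\end{itemize}
This depth step is exactly where $\dim R\geq2$ is needed, and your plan did not account for that restriction or for the separate one-dimensional case.
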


For $m=2$, the hypothesis is automatic, so we recover the results for compatible sublattices of the lattice of pairs (Corollary~\ref{pairs}). We prove Theorem~\ref{lambda-general} in Section~1 and discuss its algebraic consequences in Section~2. Intersection conditions are vacuous when $s=1$.

\section*{Acknowledgments}
The first author was partially supported by JSPS KAKENHI Grant Number JP25K07100. The second author was supported by JSPS KAKENHI Grant Number JP25KJ1744. They used ChatGPT to help develop the generalization and check the proofs, and take full responsibility for the content.

\section{Partially ordered sets}
We first describe the join-irreducible elements and their order. We then bound the number of elements in a chain and construct chains attaining this bound.

\begin{Lemma}\label{large}
If $\rho_i\geq2m-1$ for every $i<s$, then $P$ satisfies the $\lambda$-chain condition. Moreover, $P$ is pure if and only if $\rho_i=2m-1$ for every $i<s$.
\end{Lemma}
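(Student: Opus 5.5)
The plan is to make $P$ completely explicit as a subposet of the join-irreducibles of $\binom{[n]}m$, and then to count chains directly. The starting point is the standard identification $\binom{[n]}m\cong\Jc([m]\times[n-m])$, sending $I=(i_1<\dots<i_m)$ to the partition $\lambda_j=i_j-j$.

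\emph{Step 1: the single-interval case.} Under this identification, $\binom{C_i}m$ is an interval of $\binom{[n]}m$. Its bottom element is $(a_i,\dots,a_i+m-1)$ and its top element is $(b_i-m+1,\dots,b_i)$, and the interval is isomorphic to $\Jc([m]\times[|C_i|-m])$. In it, $I$ is join-irreducible exactly when $I$ differs from the bottom element in a single ``rectangular block''. That is, $I=(a_i,\dots,a_i+p-1,\;a_i+p+t,\dots,a_i+m-1+t)$ with $0\le p<m$ and $1\le t\le |C_i|-m$.

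\emph{Step 2: join-irreducibles of $L$.} Next I will determine which elements of $L$ have exactly one lower cover in $L$. An element $I\in\binom{C_j}m\setminus\binom{C_{j-1}}m$ has all its lower covers in $L$ inside $\binom{C_{j-1}}m\cup\binom{C_j}m$, by the meet rule $I\wedge J\in\binom{C_i}m$. So $I$ is join-irreducible in $L$ iff it is join-irreducible in the interval $[\,I\wedge\hat 0_{j},I\,]$ obtained by adjoining $\binom{C_{j-1}}m$. The hypothesis $\rho_{j-1}\ge 2m-1$ should guarantee that this is a clean situation. Since $C_{j-1}\cap C_j$ contains at least $2m-1$ consecutive integers, the elements of $\binom{C_j}m$ not lying in $\binom{C_{j-1}}m$ are exactly those with $i_m>b_{j-1}$. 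Such an element is join-irreducible in $L$ iff it is a ``block'' element of the form $(c,\dots,c+p-1,\,b_{j-1}+r,\dots)$ shifted so that its bottom part sits in the overlap.

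The outcome I expect is the following. $P$ is a disjoint union of grids $G_1\cong[m]\times[|C_1|-m]$ and $G_j\cong[m]\times[b_j-b_{j-1}]$ for $j\ge2$. The order relations between $G_{j-1}$ and $G_j$ are governed by the overlap: the $k$-th row of $G_j$ lies above a specific element of $G_{j-1}$, and this element is shifted by $\rho_{j-1}-(2m-1)$ columns from the top corner. I will verify these cross relations by comparing the tuples componentwise. This description is the step I expect to be the main obstacle, because the case $\rho=2m-1$ exactly has to produce the ``tight'' gluing.

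\emph{Step 3: chain counts.} With the explicit description in hand, a maximal chain of $P$ runs through consecutive grids, entering $G_j$ at some row of $G_{j-1}$ prescribed by the cross relations. Inside each grid, every saturated chain from minimum to maximum has $(\text{rows}+\text{cols}-1)$ elements. I will show that the longest chains go through the top element of each $G_{j-1}$ and the bottom of $G_j$. I will also show that every element of each grid lies on a longest saturated path of its grid that can be extended in this way, because the cross relations only force comparabilities in one direction. This gives the $\lambda$-chain condition.

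For purity, consider the case $\rho_{j-1}>2m-1$. There the cross relations let some element of $G_j$ sit directly above a non-top element of $G_{j-1}$, where the offset is $\rho_{j-1}-2m+1\ge1$. This yields a maximal chain that skips part of $G_{j-1}$, so it is strictly shorter, and $P$ is not pure. Conversely, when every $\rho_i=2m-1$, every cross cover is forced to go from the top of $G_{j-1}$ to the bottom of $G_j$, so all maximal chains have the same length.
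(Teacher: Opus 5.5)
\textbf{There is a genuine gap: the order you predict between your grids is wrong, and Step~3 is built on it.}

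Write $A=a_1$, $B=b_s$ and $z_{r,\ell}=(\alpha,\dots,\alpha+m-r-1,\ell-r+1,\dots,\ell)$, where $\alpha$ is the left endpoint of the first $C_i$ containing $\ell$. Your Step~1 is correct, and as a set $P$ is indeed the disjoint union of your blocks $G_j=\{z_{r,\ell}: b_{j-1}<\ell\le b_j\}$. (In Step~2 the initial block of a new element starts at exactly $a_j$, and only its last entry, not the whole final block, must exceed $b_{j-1}$.)

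The problem is how the blocks are attached.
\begin{itemize}
\item The top of $G_{j-1}$ is $(b_{j-1}-m+1,\dots,b_{j-1})$ and the bottom of $G_j$ is $(a_j,\dots,a_j+m-2,b_{j-1}+1)$.
\item These are incomparable. Comparability would need $b_{j-1}-m+1\le a_j$, i.e.\ $\rho_{j-1}\le m$, which contradicts $\rho_{j-1}\ge 2m-1>m$.
\item So the chains you plan to build (full chains of each block, linked top to bottom) do not exist. Your count would give $B-A+(s-1)(m-1)$ elements, while the true maximum is $B-A$.
\item Example~\ref{original-example} satisfies the hypothesis ($m=2$, $\rho_1=4$, $\rho_2=3$). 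There $p_{45}$, the top of $G_1$, and $p_{26}$, the bottom of $G_2$, are incomparable. Your count gives $8$, but maximum chains have $6$ elements.
\end{itemize}

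What actually happens is that row $r$ continues across blocks: $z_{r,b_{j-1}}<z_{r,b_{j-1}+1}$ for every $r$. So the blocks fuse into one grid $[m]\times[B-A-m+1]$, with $z_{r,\ell}\le z_{q,\ell'}$ whenever $r\le q$ and $\ell\le\ell'$. The order on $P$ is generated by this grid order together with the extra relations $z_{m,a_j+m-1}<z_{1,b_{j-1}+1}$ for $2\le j\le s$. The lower end of such a relation may lie in a block earlier than $G_{j-1}$; in the Example, $p_{45}<p_{47}$ with $p_{45}\in G_1$. Your converse for purity fails for the same reason: when all $\rho_i=2m-1$, the cross covers are these row continuations and extra relations, and none of them runs from the top of $G_{j-1}$ to the bottom of $G_j$.

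\textbf{The missing idea is a strictly increasing label on this glued grid.} Set $\varphi(z_{r,\ell})=\ell+r-A-m$.
\begin{itemize}
\item Grid steps raise $\varphi$ by exactly one.
\item The extra relation $z_{m,a_j+m-1}<z_{1,b_{j-1}+1}$ raises it by $\rho_{j-1}-2m+2$. This is at least $1$ precisely because $\rho_{j-1}\ge 2m-1$. That is where the hypothesis enters; the description of $P$ itself only uses $\rho_{j-1}\ge m$, so the hypothesis does not ``clean up'' the join-irreducibles.
\item Hence every chain has at most $B-A$ elements. Every element lies on a grid path from $z_{1,A+m}$ to $z_{m,B}$ with exactly $B-A$ elements, which gives the $\lambda$-chain condition.
\item If all $\rho_i=2m-1$, every generating relation, and hence every cover, raises $\varphi$ by one, so $P$ is pure.
\item If $\rho_{k-1}>2m-1$, you must verify from coordinates that $z_{m,a_k+m-1}<z_{1,b_{k-1}+1}$ is a cover of $P$. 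Your ``sits directly above'' is asserted, not proved. Grid paths through this cover then give a maximal chain with fewer than $B-A$ elements, so $P$ is not pure.
\end{itemize}
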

\begin{proof}
Set $A=a_1$ and $B=b_s$. If $L$ has one element, then $P$ is empty and the assertions are immediate. Otherwise, for $A+m\leq j\leq B$, let $C_{k(j)}$ be the first interval containing $j$, with left endpoint $\alpha(j)$. We claim that the join-irreducible elements are
\begin{equation}\label{tuples}
z_{r,j}=(\alpha(j),\ldots,\alpha(j)+m-r-1,j-r+1,\ldots,j),\qquad 1\leq r\leq m.
\end{equation}
The final segment has length $r$ and last entry $j$; the first is empty for $r=m$. These tuples are distinct, since $j\geq\alpha(j)+m$. For $m=2$, the two rows are $z_{1,j}=(\alpha(j),j)$ and $z_{2,j}=(j-1,j)$.

A tuple in $\binom{[a,b]}m$ is join-irreducible precisely when exactly one entry can be decreased by one while remaining in the lattice. This proves \eqref{tuples} for the first interval. Adjoin the remaining interval lattices successively. For $k\geq2$, the minimum of $\binom{C_k}m$ lies in $\binom{C_{k-1}}m$, and every new tuple ends at $j>b_{k-1}$, so no new tuple lies below an old one. Thus the old join-irreducibles remain unchanged. New tuples that are not join-irreducible in $\binom{C_k}m$ remain joins of two smaller tuples. A new tuple $z_{r,j}$ has a unique lower cover in $\binom{C_k}m$, obtained by decreasing entry $m-r+1$ by one. Every old $y<z_{r,j}$ has $y_m<j$, hence $y_{m-r+1}\leq j-r$, so lies below this cover. Induction proves \eqref{tuples}.

Arrange $z_{r,j}$ in row $r$ and column $j$. We call the relations $z_{r,j}<z_{r+1,j}$ and $z_{r,j}<z_{r,j+1}$, whenever defined, \emph{grid steps}. A \emph{grid path} is a chain whose successive elements are related by grid steps. The order on $P$ is generated by these steps and the additional relations $d_k:=z_{m,a_k+m-1}<u_k:=z_{1,b_{k-1}+1}$ for $2\leq k\leq s$.
The elements $d_k$ and $u_k$ are defined since $A+m\leq a_k+m-1\leq b_{k-1}$, and the relations follow from their coordinates. Conversely, if $z_{r,j}<z_{q,\ell}$, then $j\leq\ell$. For $r\leq q$, a grid path suffices. For $r>q$, the $(m-q)$th entries give $j-q\leq\alpha(\ell)+m-q-1$. Thus $\alpha(j)+m\leq j\leq\alpha(\ell)+m-1$, so $k(j)<k(\ell)$. With $k=k(\ell)$, we have $z_{r,j}\leq d_k<u_k\leq z_{q,\ell}$, with grid paths on either side.

Assign $z_{r,j}$ the label $\varphi(z_{r,j})=j-A-m+r$, one plus the number of steps in any grid path from $z_{1,A+m}$ to $z_{r,j}$. These labels range from $1$ to $B-A$ and increase by one along grid steps. For the additional relations $d_k<u_k$, the intersection hypothesis gives
\begin{equation}\label{gap}
\varphi(u_k)-\varphi(d_k)=\rho_{k-1}-2m+2\geq1.
\end{equation}
Thus $\varphi$ is strictly increasing, so every chain has at most $B-A$ elements. Every element of $P$ lies on a grid path from the minimum $z_{1,A+m}$ to the maximum $z_{m,B}$ with $B-A$ elements, proving the $\lambda$-chain condition.

To determine when $P$ is pure, we examine its covers. If all $\rho_i=2m-1$, every generating relation, and hence every cover, increases the label by one, so all maximal chains have $B-A$ elements.

Otherwise, choose $k$ with $\rho_{k-1}>2m-1$. We claim that $d_k<u_k$ is a cover in $P$. The first $m-1$ entries of both endpoints are $a_k,\ldots,a_k+m-2$. If $d_k\leq z_{r,j}\leq u_k$ and $r\geq2$, its last two entries are consecutive, forcing $z_{r,j}=d_k$. If $r=1$, then $\alpha(j)=a_k$, hence $j>b_{k-1}$ and $z_{r,j}=u_k$. Grid steps are also covers since their labels differ by one. Take grid paths from $z_{1,A+m}$ to $d_k$ and from $u_k$ to $z_{m,B}$, and connect them by this cover. The resulting chain is maximal and has fewer than $B-A$ elements, since $\varphi(u_k)-\varphi(d_k)>1$ by \eqref{gap}.\qedhere
\end{proof}

\begin{proof}[Proof of Theorem~\ref{lambda-general}]
Suppose $\rho_k\leq m$, and let $L^-$ and $L^+$ be the unions of the interval lattices over $i\leq k$ and $i>k$, respectively. Since $b_k-m+1\leq a_{k+1}$, we have $\max L^-\leq\min L^+$. Let $P^-$ and $P^+$ be their join-irreducible posets, and let $\oplus$ denote ordinal sum, which places its first summand entirely below its second. If $\rho_k=m$, the endpoints coincide and $P\cong P^-\oplus P^+$. Otherwise, $P\cong P^-\oplus\{e\}\oplus P^+$, where $e=\min L^+$ is an additional join-irreducible element.

Maximal chains in an ordinal sum are concatenations of maximal chains in its summands, so the $\lambda$-chain condition and purity each reduce to the summands. Splitting at each $\rho_i\leq m$ and applying Lemma~\ref{large} completes the proof.
\end{proof}

\section{Commutative Algebra}
For a standard graded Cohen--Macaulay algebra $A$ of dimension $d$, write $\Hilb_A(t)=h_A(t)/(1-t)^d$, where $h_A(t)=h_0+\cdots+h_st^s$ and $h_s\ne0$. The algebra is \emph{pseudo-Gorenstein} if $h_s=1$, \emph{level} if its canonical module is generated in a single degree, and \emph{nearly Gorenstein} if the trace ideal of its canonical module contains the homogeneous maximal ideal; see \cite{pseudo,nearlyGor}.

\Needspace{6\baselineskip}
\begin{proof}[Proof of Corollaries~\ref{pseudo-general} and~\ref{equivalences-general}]
By \cite[Theorem~4.3 and Corollary~4.7(iv)]{ALL}, $R$ is a normal Cohen--Macaulay domain whose generating minors form a diagonal SAGBI basis. The initial algebra is graded isomorphic to the Hibi ring $K[L]$ \cite[Proposition~3.2]{LS}. Both have dimension $|P|+1$ and the same $h$-vector. By Theorem~\ref{lambda-general} and \cite[Theorem~2.1]{pseudo}, $R$ is pseudo-Gorenstein.

By Stanley's Hilbert-series criterion \cite[Theorem~4.4]{Stanley}, the Cohen--Macaulay domains $R$ and $K[L]$ are Gorenstein simultaneously. The latter is Gorenstein exactly when $P$ is pure \cite{Hibi87}. Theorem~\ref{lambda-general} therefore gives the intersection criterion.

If $R$ is level, pseudo-Gorensteinness implies that its canonical module is cyclic, so $R$ is Gorenstein. Nearly Gorensteinness gives the same conclusion by \cite[Theorem~A]{Miyashita} when $\dim R\geq2$. If $\dim R=1$, then $P$ is empty and $R\cong K[t]$. The converses are immediate.
\end{proof}

Let $L_n=\{p_{ij}:1\leq i<j\leq n\}$ with the componentwise order. A sublattice $L\subseteq L_n$ is \emph{compatible} if $p_{ij}\in L$ and $i\leq k<\ell\leq j$ imply $p_{k\ell}\in L$. Set $\Rc_K[L]=K[x_i y_j-x_j y_i:p_{ij}\in L]$.

\begin{Corollary}\label{pairs}
Let $L\subseteq L_n$ be compatible and let $L\cong\Jc(P)$. Then $P$ satisfies the $\lambda$-chain condition and $\Rc_K[L]$ is pseudo-Gorenstein. Moreover, for $\Rc_K[L]$, Gorensteinness, levelness and nearly Gorensteinness are equivalent.
\end{Corollary}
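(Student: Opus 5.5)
The plan is to reduce Corollary~\ref{pairs} to the case $m=2$ of Theorem~\ref{lambda-general} and of Corollaries~\ref{pseudo-general} and~\ref{equivalences-general}. For $m=2$ the hypothesis ``$\rho_i\leq m$ or $\rho_i\geq 2m-1$'' becomes ``$\rho_i\leq 2$ or $\rho_i\geq 3$'', which always holds. So the whole task is to identify a compatible sublattice $L\subseteq L_n$ with a union $\bigcup_i\binom{C_i}{2}$ of the type in the introduction, and $\Rc_K[L]$ with the corresponding special fiber ring $R$.

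First I identify $p_{ij}$ with the pair $(i,j)\in\binom{[n]}2$; the componentwise order on $L_n$ is exactly the order used in the introduction. Compatibility says that whenever $p_{ij}\in L$, the whole set $\binom{[i,j]}2$ lies in $L$. Let $C_1,\dots,C_s$ be the intervals $[i,j]$ for which $p_{ij}$ is a maximal element of $L$ with respect to \emph{containment} of intervals, i.e.\ there is no $p_{k\ell}\in L$ with $[i,j]\subsetneq[k,\ell]$. By compatibility, $L=\bigcup_t\binom{C_t}{2}$, and each $|C_t|\geq 2$. No two of these intervals are nested, so after sorting by left endpoints we get $a_1<\dots<a_s$ and also $b_1<\dots<b_s$. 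This is exactly the setup of the introduction with $m=2$, and here the sublattice property comes for free from the general discussion. The one degenerate point to check is when some $n'\in[n]$ is not covered by any interval, or when consecutive intervals are disjoint. The setup allows $C_i\cap C_{i+1}=\emptyset$ with $\rho_i=0\leq m$, and the $\rho_i\leq m$ branch of the proof of Theorem~\ref{lambda-general} handles this through the ordinal sum decomposition, so nothing extra is needed. This already gives the $\lambda$-chain condition for $P$ by Theorem~\ref{lambda-general}.

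For the algebraic statements I take $X$ to be the generic $2\times n$ matrix with rows $(x_1,\dots,x_n)$ and $(y_1,\dots,y_n)$. Then $\Delta_{\{i,j\}}=x_iy_j-x_jy_i$, so $\Rc_K[L]=K[\Delta_I:I\in L]=R$. Corollary~\ref{pseudo-general} then gives pseudo-Gorensteinness. Corollary~\ref{equivalences-general} gives the equivalence of Gorenstein, level and nearly Gorenstein, each being equivalent to $\rho_i\leq 3$ for all $i$.

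The main obstacle, though a mild one, is the careful verification that a compatible sublattice is exactly such a union of interval lattices with strictly increasing endpoints. In particular one must confirm that the maximal intervals are pairwise non-nested and cover $L$. Both facts follow directly from the definition of compatibility and from choosing maximal intervals under containment.
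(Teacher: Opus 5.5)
Your proposal is correct and follows essentially the paper's route: you write $L=\bigcup_t\binom{C_t}{2}$ over the maximal intervals (equivalently, maximal cliques), note that the hypothesis is vacuous for $m=2$, and apply Theorem~\ref{lambda-general} and Corollaries~\ref{pseudo-general} and~\ref{equivalences-general} with $\Delta_{\{i,j\}}=x_iy_j-x_jy_i$. The only difference is that you check directly from compatibility that these intervals are non-nested and cover $L$, where the paper cites \cite[Corollary~3.6]{BH}; because your intervals come from elements of $L$, you also avoid the singleton cliques that the paper removes by deleting isolated vertices.
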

\begin{proof}
Deleting isolated vertices and relabeling in increasing order preserves $L$ and $\Rc_K[L]$ up to isomorphism. The graph with edges $\{i,j\}$ for $p_{ij}\in L$ has interval maximal cliques \cite[Corollary~3.6]{BH}. Apply Theorem~\ref{lambda-general} and its corollaries with $m=2$.
\end{proof}

\begin{Example}\label{original-example}
The compatible lattice $L\subseteq L_7$ in Figure~\ref{fig-original} is defined by the intervals $[1,5]$, $[2,6]$, and $[4,7]$. Since $\rho_1=4$ and $\rho_2=3$, $\Rc_K[L]$ is pseudo-Gorenstein but not Gorenstein. The shaded vertices form $P$: each has exactly one lower cover in $L$, and their order is inherited from $L$. The bold \emph{fundamental chain} of $L$ increases the second index whenever possible. Its intersection with $P$ is the first chain below:
\[
\begin{gathered} p_{13}<p_{14}<p_{15}<p_{26}<p_{47}<p_{67},\\ p_{13}<p_{23}<p_{26}<p_{47}<p_{67}. \end{gathered}
\]
The first is a maximum chain with six elements; the second is a maximal chain with five. Thus $P$ is not pure.
\end{Example}

\begin{figure}[htbp]
\centering
\begin{tikzpicture}[x=1.1cm,y=.55cm,every node/.style={font=\small,inner sep=1.5pt}]
\begin{scope}
\node[draw,shape=circle] (12) at (0,0) {12};
\node[draw,shape=circle,fill=black!20] (13) at (1,1) {13};
\node[draw,shape=circle,fill=black!20] (14) at (2,2) {14};
\node[draw,shape=circle,fill=black!20] (15) at (3,3) {15};
\node[draw,shape=circle,fill=black!20] (23) at (0,2) {23};
\node[draw,shape=circle] (24) at (1,3) {24};
\node[draw,shape=circle] (25) at (2,4) {25};
\node[draw,shape=circle,fill=black!20] (26) at (3,5) {26};
\node[draw,shape=circle,fill=black!20] (34) at (0,4) {34};
\node[draw,shape=circle] (35) at (1,5) {35};
\node[draw,shape=circle] (36) at (2,6) {36};
\node[draw,shape=circle,fill=black!20] (45) at (0,6) {45};
\node[draw,shape=circle] (46) at (1,7) {46};
\node[draw,shape=circle,fill=black!20] (47) at (2,8) {47};
\node[draw,shape=circle,fill=black!20] (56) at (0,8) {56};
\node[draw,shape=circle] (57) at (1,9) {57};
\node[draw,shape=circle,fill=black!20] (67) at (0,10) {67};
\draw[line width=3] (12)--(13)--(14)--(15)--(25)--(26)--(36)--(46)--(47)--(57)--(67);
\draw (23)--(24)--(25)--(26);
\draw (34)--(35)--(36);
\draw (45)--(46)--(47);
\draw (56)--(57);
\draw (47)--(57)--(67);
\draw (13)--(23);
\draw (14)--(24)--(34);
\draw (25)--(35)--(45);
\draw (26)--(36)--(46)--(56);
\end{scope}
\end{tikzpicture}
\caption{The Hasse diagram of $L$; $ij$ denotes $p_{ij}$.}
\label{fig-original}
\end{figure}

\begin{Remark}
The intersection hypothesis cannot be omitted for higher minors. For $m=3$ and cliques $[1,5]$, $[2,6]$, counting multichains in $L$ gives $h_R(t)=1+6t+10t^2+4t^3$, so $R$ is not pseudo-Gorenstein. This unit interval complex also appears in \cite[Examples~3.6 and~4.2]{ALL}.
\end{Remark}

\FloatBarrier

\end{document}